\theoremstyle{plain}
\newtheorem{theorem}{Theorem}
\theoremstyle{definition}
\newtheorem{definition}{Definition}
\newtheorem{problem}{Problem}
\theoremstyle{remark}
\newtheorem{example}{Example}
\newtheorem{exercise}{Exercise}
\newtheorem{remark}{Remark}
\author{J. R. Arteaga, M. Malakhaltsev}
\title[$G$-structures and differential equations]{Ideas of E.~Cartan and S.~Lie in modern geometry: $G$-structures and differential equations. 
Lecture 1}
\begin{document}
\Opensolutionfile{ans}[Lecture1_answers]
\maketitle

\fbox{\fbox{\parbox{5.5in}{
\textbf{Problem:}\\
How to reduce to a simplest equation one ODE.
}}}
\vspace{1cm}

Our purpose of this mini-curse is to explain some ideas of E. Cartan and S. Lie when we study differential geometry, particularly we will to explain the Cartan reduction method. The Cartan reduction method is a technique in Differential Geometry for determining whether two geometrical structure are the same up to a diffeomorphism. This method use new tools of differential geometry as principal bundles, $G$-structures and jets theory. We start with an example of a $G$-structure: the $3$-webs in $\mathbb{R}^{2}$. Here we use the Cartan method  to classify the differential equations but not to resolve.   
This is a classification can be a weak classification in the sense of not involving all the structural invariants.

\section*{$3$-Webs in $\mathbb{R}^{2}$}
\begin{definition}\label{Def:3-web}
A collection of three foliations of $\mathbb{R}^{2}$, $\mathcal{L} = \{L_{1}, L_{2}, L_{3}\}$ defined in an open set  $U$ in the plane, such that pairwise are transverse  is called a $3$-web.
\end{definition}

\begin{example}\label{Exa:example1_3-web}
Let $\mathcal{L} = \{L_{1}, L_{2}, L_{3}\}$ be the $3$-web  in $\mathbb{R}^{2}$ where
\begin{equation}
\begin{cases}
L_{1} = \{(x,y) \mid x= \text{const.}\}\\
L_{2} = \{(x,y) \mid y= \text{const.}\}\\
L_{3} = \{(x,y) \mid y-x= \text{const.}\}
\end{cases}
\end{equation}
\end{example}

\subsection*{Associated $3$-web to one ODE}

\begin{theorem}\label{Th:theorem_1}

For every ordinary differential equation ODE of first order,
\begin{equation}\label{Eq:EDO_1}
\dfrac{dy}{dx} = F(x,y) 
\end{equation}
where  $F(x,y)$ is a smooth function defined in an open set  $U \subseteq \mathbb{R}^{2}$, such that  $F(x,y) \neq 0$ for all  $(x,y)\in U$,   we can always associated  a  $3$-web defined in $W$ for some  $W \subseteq U$.
\end{theorem}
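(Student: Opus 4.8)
The plan is to take $L_1$ and $L_2$ to be the two coordinate foliations and $L_3$ to be the foliation by solutions of \eqref{Eq:EDO_1}, then verify that the three are pairwise transverse on a suitable $W$. Concretely, I would set $L_1 = \{x = \text{const.}\}$ and $L_2 = \{y = \text{const.}\}$ exactly as in Example~\ref{Exa:example1_3-web}; these are globally defined foliations of $\mathbb{R}^2$ and are transverse to each other everywhere, since their tangent lines are $\mathbb{R}\,\partial_y$ and $\mathbb{R}\,\partial_x$.

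The substantive step is constructing $L_3$. Equation~\eqref{Eq:EDO_1} is the integral-curve equation of the smooth vector field $X = \partial_x + F(x,y)\,\partial_y$ on $U$, which is nowhere zero because its first component equals $1$. I would invoke the local existence and uniqueness theorem (Picard--Lindel\"of) together with smooth dependence on initial conditions --- equivalently, the flow-box (rectification) theorem for a nonvanishing vector field --- to produce, around any chosen point of $U$, an open set $W \subseteq U$ and a submersion $g \colon W \to \mathbb{R}$ whose level sets are precisely the integral curves of $X$ in $W$. This $g$ defines the foliation $L_3$ on $W$. (Informally, $g$ is a first integral of the ODE: a function constant along solutions with nonvanishing differential; the clean way to guarantee its existence on a neighborhood is the straightening theorem.)

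It remains to check transversality of $L_3$ with $L_1$ and with $L_2$ on $W$. At each point the tangent line to the leaf of $L_3$ is $\mathbb{R}\,(\partial_x + F\,\partial_y)$. This is never $\mathbb{R}\,\partial_y$ (the tangent line to the leaves of $L_1$), since the $\partial_x$-component is $1 \neq 0$; hence $L_1$ and $L_3$ are transverse. It is never $\mathbb{R}\,\partial_x$ (the tangent line to the leaves of $L_2$), since the $\partial_y$-component is $F(x,y)$, which is nonzero by hypothesis; hence $L_2$ and $L_3$ are transverse. Therefore $\mathcal{L} = \{L_1, L_2, L_3\}$ is a $3$-web on $W$.

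The only genuine obstacle is the foliation claim for $L_3$: the integral curves of an ODE need not be globally disjoint nor fill $U$ in a product-like fashion, which is exactly why the statement only asserts a web on some $W \subseteq U$ --- one passes to a flow-box neighborhood. Everything else is an immediate computation with tangent directions, in which the hypothesis $F \neq 0$ is used essentially, and only, to separate $L_3$ from the horizontal foliation $L_2$.
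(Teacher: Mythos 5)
Your proof takes essentially the same route as the paper: $L_{1}$ and $L_{2}$ are the coordinate foliations and $L_{3}$ is the foliation by integral curves of the ODE on a suitable neighborhood $W$, obtained from the local existence theory. In fact your write-up is more complete than the paper's own proof, which only invokes Picard--Lindel\"of for the family of solutions and leaves both the flow-box justification that $L_{3}$ is a foliation and the pairwise transversality check (the only place the hypothesis $F(x,y)\neq 0$ is used) implicit.
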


\begin{proof}
For any smooth function $F(x,y)$, s.t. $F(x,y)\neq 0$ in $U$ we can associate the $3$-web $\mathcal{L} = \{L_{1}, L_{2}, L_{3}\}$ where
\begin{equation}
\begin{cases}
L_{1} = \{(x,y) \mid x= \text{const.}\}\\
L_{2} = \{(x,y) \mid y= \text{const.}\}\\
L_{3} = \{(x,y) \mid y-f(x)= \text{const.}\}
\end{cases}
\end{equation}
where  $f(x)$ is the family of integral curves  of  \eqref{Eq:EDO_1} in $U$. The Picard-Lindelöff theorem guarantees us the existence of this family in any $W\subseteq  U$.
\end{proof}

\begin{remark}\label{Re:remark1}
The ODE  \eqref{Eq:EDO_1} where $F(x,y)$ satisfies the conditions of theorem \ref{Th:theorem_1} we called one ODE $3$-web type.
\end{remark}

\begin{example}\label{Exa:example2}
Let $U = \{(x,y) \mid x > 1\}$ be an open set of  $\mathbb{R}^{2}$. Consider the following ODE in $U$,
\begin{equation}
\dfrac{dy}{dx} = 1-x
\end{equation}
In $U$ we define the associated $3$-web $\mathcal{L} = \{L_{1}, L_{2}, L_{3}\}$ where
\begin{equation}
\begin{cases}
L_{1} = \{(x,y) \mid x= \text{const.}\}\\
L_{2} = \{(x,y) \mid y= \text{const.}\}\\
L_{3} = \{(x,y) \mid y-\left( x - \dfrac{x^{2}}{2} \right)= \text{const.}\}
\end{cases}
\end{equation}
In the Figure \ref{Fig:example_2} we can see that effectively $\mathcal{L}$ is a $3$-web in $U$.

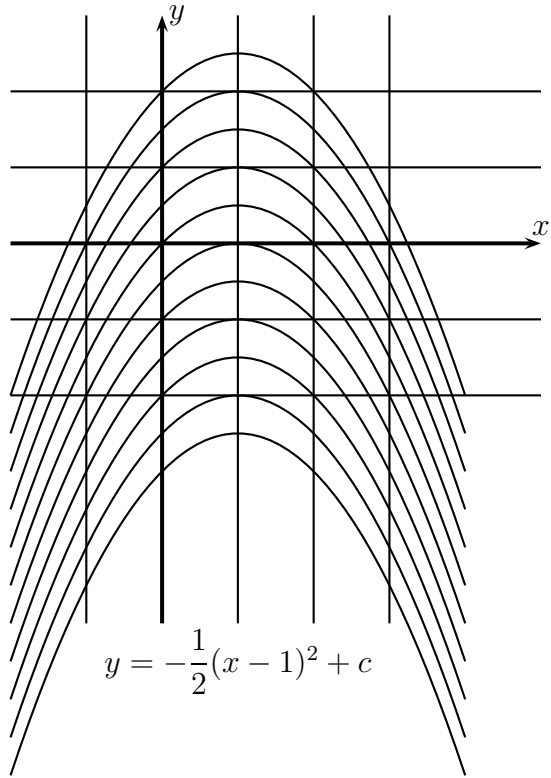
\begin{figure}[h]
\begin{center}
\psset{xunit=1cm,yunit=1cm}
\begin{pspicture}(-5,-7)(7,3)
\psline[linewidth=1.5pt]{->}(-2,0)(5,0)\rput(5,0.2){$x$}
\psline[linewidth=1.5pt]{->}(0,-5)(0,3)\rput(0.2,3){$y$}
\rput(1,-5.5){$y=-\dfrac{1}{2}(x-1)^{2}+c$}
\psline{-}(-2,1)(5,1)
\psline{-}(-2,2)(5,2)
\psline{-}(-2,-1)(5,-1)
\psline{-}(-2,-2)(5,-2)
\psline{-}(1,-5)(1,3)
\psline{-}(2,-5)(2,3)
\psline{-}(3,-5)(3,3)
\psline{-}(-1,-5)(-1,3)
\psplot{-2}{4}{-0.5 x 1 sub 2 exp mul 0.5 add} %
\psplot{-2}{4}{-0.5 x 1 sub 2 exp mul 1 add} %
\psplot{-2}{4}{-0.5 x 1 sub 2 exp mul 1.5 add} %
\psplot{-2}{4}{-0.5 x 1 sub 2 exp mul 2 add} %
\psplot{-2}{4}{-0.5 x 1 sub 2 exp mul 2.5 add} %
\psplot{-2}{4}{-0.5 x 1 sub 2 exp mul 0 add} %
\psplot{-2}{4}{-0.5 x 1 sub 2 exp mul -0.5 add} %
\psplot{-2}{4}{-0.5 x 1 sub 2 exp mul -1 add} %
\psplot{-2}{4}{-0.5 x 1 sub 2 exp mul -1.5 add} %
\psplot{-2}{4}{-0.5 x 1 sub 2 exp mul -2 add} %
\psplot{-2}{4}{-0.5 x 1 sub 2 exp mul -2.5 add} %
\end{pspicture}
\caption{Example 2, $3$-web associated to $y' = 1-x$}
\label{Fig:example_2}
\end{center}
\end{figure}
\end{example}

\begin{exercise}\label{Exe:exercise1}
Given the ODE $ y '= y $, determine an open set $ U $ of the plane where you can define a $ 3 $-web associated with it. Write the $ 3 $-web explicitly and draw a picture of it.
\begin{sol}
For example $ U = \{(x,y)\mid y > 0\} $, 
$ L_{1} = \{(x,y) \mid x= \text{const.}\} $, 
$ L_{2} = \{(x,y) \mid y= \text{const.}\} $, 
$ L_{3} = \{(x,y) \mid y-c e^{x}=0, \, \text{where}\,  c=\text{const.}\} $
\end{sol}
\end{exercise}

\subsection*{Equivalence of $3$-webs}

\begin{definition}\label{Def:web_equivalence}
We say that two $3$-web $\mathcal{L}_ {1}$ and $\mathcal {L}_ {2}$ defined in $U_ {1}$ and $ U_{2}$ respectively, are equivalent if exists a diffeomorphism $\varphi$,
\begin{equation}
\varphi : U_{1} \longrightarrow U_{2}
\end{equation}
such that $\varphi (\mathcal{L}_{1}) = \mathcal{L}_{2}$.
In this case we will write $\mathcal{L}_{1} \sim \mathcal{L}_{2}$.
\end{definition}

The diffeomorphism $\varphi$ we can write as, 
\begin{equation}
\begin{split}
\varphi :& \, U_{1} \longrightarrow U_{2}\\
& (x,y) \mapsto \varphi(x,y)= (\tilde{x}, \tilde{y})=(\alpha(x,y), \beta(x,y))
\end{split}
\end{equation}

\subsection*{Adapted coframe to a $3$-web}

Let  $\mathbb{R}^{2}$, $\mathcal{L} = \{L_{1}, L_{2}, L_{3}\}$ be a $3$-web defined in $U$. Let 
$\vec{u}_{i}(x,y)$ be a vector field that has $L_{i}$ as its integral curves, i.e. $L_{i}$ is the flow of $\vec{u}_{i}$ for $i=1,2,3$. 
\begin{equation}
\begin{cases}
L_{1} = \text{flow of\quad } \vec{u}_{1}\\
L_{2} = \text{flow of\quad } \vec{u}_{2}\\
L_{3} = \text{flow of\quad } \vec{u}_{3}
\end{cases}
\end{equation}
This means that if $\vec{r}_{i}(t)$ is a parametrization of $L_{i}$, then 
\begin{equation}
\dfrac{d}{dt}\vec{r}_{i}(t) = \lambda_{i}\vec{u}_{i}(x(t),y(t)), \quad \text{for any \quad} \lambda_{i}\in \mathbb{R} \backslash 0
\end{equation}
By definition \ref{Def:3-web} for any pair $\{\vec{u}_{i}, \vec{u}_{j}\}$, ($i,j \in \{1,2,3\}$, $i\neq j$), $\vec{u}_{i}$ and $\vec{u}_{j}$ are linear independent in each point $(x,y) \in U$, then we can give the following definition.

\begin{definition}[Adapted coframe to $\mathcal{L}$]
The coframe $ \{\eta^{1}, \eta^{2}\} $ where,
\begin{equation}\label{Eq:associate_frame}
\begin{cases}
\eta^{1} \text{ \,  annihilates \,} \vec{u}_{2}, \quad \text{i.e.} \quad \eta^{1}(\vec{u}_{2})=0\\
\eta^{2}\text{ \,  annihilates \,} \vec{u}_{1}, \quad \text{i.e.} \quad \eta^{2}(\vec{u}_{1})=0\\
\eta^{3}= \eta^{1} - \eta^{2} , \text{ \,  annihilates \,} \vec{u}_{3},  \quad \text{i.e.} \quad \eta^{3}(\vec{u}_{3})=0\\
\end{cases}
\end{equation}
is called an \emph{adapted coframe} to a $3$-web $\mathcal{L} = \{L_{1}, L_{2}, L_{3}\}$.
\end{definition}

\begin{example}\label{Exa_example3} 
Find an adapted coframe to the ODE,
\begin{equation}
y' = 1 -x
\end{equation}
Let $U = \{(x,y) \in \mathbb{R}^{2} \mid x > 1\}$. The associated $ 3 $-web $\mathcal{L} = \{L_{1}, L_{2}, L_{3}\}$ described in the Example \ref{Exa:example2} we can re-write in terms of vector fields as  follows:
\begin{equation}
\begin{cases}
L_{1} = span \{\vec{u}_{1}\} \quad \text{where}\quad
\vec{u}_{1} = \dfrac{\partial}{\partial x}  
\\
L_{2} = span \{\vec{u}_{2}\} \quad \text{where}\quad
\vec{u}_{2} = (1-x)\dfrac{\partial}{\partial y}
\\
L_{3} = span \{\vec{u}_{1} + \vec{u}_{2}\} \quad \text{where}\quad
\vec{u}_{3} = \vec{u_{1}}+\vec{u}_{2} = 
\dfrac{\partial}{\partial x}  + (1-x)\dfrac{\partial}{\partial y}
\end{cases}
\end{equation}
An adapted coframe $ \eta = \{\eta^{1}, \eta^{2} \} $ to this ODE is
\begin{equation}
\begin{cases}
\eta^{1}= (1-x) dx\\
\eta^{2}= dy\\
\eta^{3}= dx - dy
\end{cases}
\end{equation}
\end{example}

\begin{exercise}\label{Exe_exercise2} 
Find one associate frame $ \eta = \{\eta^{1}, \eta^{2} \} $ to the ODE,
\begin{equation}
y' = y
\end{equation}
\begin{sol}
We can take as associated coframe $ \eta = \{\eta^{1}, \eta^{2} \} $  the following,
\begin{equation}
\begin{cases}
\eta^{1}= y dx\\
\eta^{2}= dy\\
\eta^{3}= \eta^{1} - \eta^{2} = ydx - dy
\end{cases}
\end{equation}
\end{sol}
\end{exercise}

\section*{Equivalence of ODE's web-type}
\begin{problem}
Given two ODE,
\begin{equation}
\dfrac{dy}{dx} = F_{1}(x,y), \qquad 
\dfrac{dy}{dx} = F_{2}(x,y),
\end{equation}
under what conditions we can say that they are equivalent?
\end{problem}

Suppose that we get the following definition:

\begin{definition}[Bad definition]
Two ODE
\begin{equation}
\dfrac{dy}{dx} = F_{1}(x,y), \qquad 
\dfrac{dy}{dx} = F_{2}(x,y),
\end{equation}
are equivalents if there exists a change of coordinates (diffeomorphism $\phi$),
  \begin{equation}
  \begin{split}
 & \phi : (x,y) \mapsto (\tilde{x}, \tilde{y}),\\
 & \tilde{x} = \alpha (x,y)\\
 & \tilde{y} = \beta (x,y)
  \end{split}
\end{equation}
such that $ \phi $ sends the associate $ 3 $-web of one to the associate $ 3 $-web of the other one.
\end{definition}

This definition is a bad definition because we do not know any invariant of one ODE under a coordinates change. For this reason we must to know some invariant of a $ 3 $-web associated to one ODE.
We will approach to solve this problem  from the viewpoint of Cartan. 

\subsection*{Adapted coframe to one ODE}

\begin{definition}[Adapted coframe to one ODE.]
Let
\begin{equation} \label{Eq:ODE}
\dfrac{dy}{dx} = F(x,y), 
\end{equation}
be one ODE $3$-web type.
A coframe $ \{\eta^{1}, \eta^{2}\} $ of the plane $\mathbb{R}^{2}$ such that,
\begin{equation}\label{Eq:adapted_coframe_ODE}
\boxed{
\begin{cases}
\eta^{1}= F(x,y) dx\\
\eta^{2}= dy\\
\eta^{3}= \eta^{1} - \eta^{2} = F(x,y)dx - dy
\end{cases}
}
\end{equation}
is called an \emph{adapted coframe} to one ODE  \eqref{Eq:ODE}
\end{definition}

\subsection*{The Blaschke-Chern curvature form}

Suppose that we have one EDO $ 3 $-web types  with two different adapted coframes \eqref{Eq:adapted_coframe_ODE},
\begin{equation}
\begin{split}
\dfrac{dy}{dx} = F(x,y) \rightsquigarrow
\begin{cases}
\eta^{1} = F(x,y)dx \\
\eta^{2} = dy\\
\eta^{3} = \eta^{1} - \eta^{2} = F(x,y)dx - dy
\end{cases}
\\
\dfrac{dy}{dx} = F(x,y) \rightsquigarrow
\begin{cases}
\underline{\eta}^{1} = F(x,y)dx \\
\underline{\eta}^{2} = dy\\
\underline{\eta}^{3} = \underline{\eta}^{1} - \underline{\eta}^{2} = F(x,y)dx - dy
\end{cases}
\end{split}
\end{equation}

We need to determine how unique our choice of adapted coframe is?. 
Any other adapted coframe  change by the rule,
\begin{equation}\label{Eq:equation_19}
\left(
\begin{matrix}
\tilde{\eta}^{1}\\
\tilde{\eta}^{2}
\end{matrix}
\right)
=
\left(
\begin{matrix}
\alpha & 0 \\
0 & \beta
\end{matrix}
\right)
\left(
\begin{matrix}
\eta^{1}\\
\eta^{2}
\end{matrix}
\right)
\end{equation}
for any non vanishing functions $ \alpha = \alpha (x,y) $ and $ \beta = \beta (x,y) $. 
But the third condition of an adapted coframe must be satisfy,
\begin{equation}\label{Eq:equation_20}
\tilde{\eta^{3}} = \gamma \eta \Rightarrow
\tilde{\eta^{1}} - \tilde{\eta^{2}}  = \gamma 
(\eta^{1}  - \eta^{2} )
\end{equation}
for any non vanishing function $ \gamma = \gamma (x,y) $.
Solving equations \eqref{Eq:equation_19} and \eqref{Eq:equation_20} we have that 
\begin{equation}
\alpha = \beta = \gamma
\end{equation}
Therefore two adapted coframes to the same ODE satisfy,
\begin{equation}\label{Eq:scalar_change}
\boxed{
\left(
\begin{matrix}
\tilde{\eta}^{1}\\
\tilde{\eta}^{2}
\end{matrix}
\right)
=
\left(
\begin{matrix}
\alpha & 0 \\
0 & \alpha
\end{matrix}
\right)
\left(
\begin{matrix}
\eta^{1}\\
\eta^{2}
\end{matrix}
\right)
}
\end{equation}
for non-vanishing function $ \alpha = \alpha (x,y)$. 

\subsection*{The space of  adapted coframes to one ODE}

Let $B$ be the space of all adapted coframes to one ODE given by the $1$-forms  
\begin{equation}
\eta = (\eta^{1}, \eta^{2}) =
\begin{cases}
\eta^{1} = F(x,y)dx \\
\eta^{2} = dy\\
\eta^{3} = \eta^{1} - \eta^{2} = F(x,y)dx - dy
\end{cases}
\end{equation}

Let us denote by $q=(\vec{x},\eta)$ a point of $B$, where $\vec{x}=(x,y)$ is a point of the plane $\mathbb{R}^{2}$ and $\eta = \{\eta^{1}, \eta^{2}\}$ is an adapted coframe \eqref{Eq:adapted_coframe_ODE} of a fixed ODE. Two points $q_{1}, q_{2} \in B$ with $p$ fixed satisfy $q_{2} = g\cdot q_{1}$, where $g$ is a scalar matrix, $g\in E(2)$. The set of the scalar matrices $E(2)$ is a group  under standard multiplication of matrices and it is a sub-group of the general linear group $GL(2)$.  

$B$ is a manifold and precisely meets the definition of a principal $G$-bundle where the group actions is $E(2)$ and projection $\pi : B \longrightarrow \mathbb{R}^{2}$.

On the manifold $B$ we can take coordinates $(x,y,\alpha)$ and we will consider the coframe field: 
\begin{equation}
\boxed{
\theta^{i} = \left(g^{-1}\right)^{i}_{s}\eta^{s} 
}
\qquad
\boxed{
\begin{split}
&\theta^{1} = \alpha^{-1}\eta^1 = \alpha^{-1} F(x,y) dx
\\
&\theta^{2} = \alpha^{-1}\eta^2 = \alpha^{-1} dy 
\\
&\theta^{3} = d\alpha
\end{split}
}
\label{eq:coframe_field_on_B}
\end{equation}

\begin{theorem}
There exists a unique $1$-form $\varphi$ such that
\begin{equation}\label{Eq:equation_19_1}
\left(
\begin{matrix}
d \theta^{1}\\
d \theta^{2}
\end{matrix}
\right)
=
\left(
\begin{matrix}
\varphi & 0 \\
0 & \varphi
\end{matrix}
\right)
\wedge
\left(
\begin{matrix}
\theta^{1}\\
\theta^{2}
\end{matrix}
\right)
\end{equation}
are satisfied.
\end{theorem}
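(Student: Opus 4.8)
The plan is to compute the exterior derivatives $d\theta^1$ and $d\theta^2$ explicitly in the coordinates $(x,y,\alpha)$ on $B$, read off the form $\varphi$ from the coefficients, and then argue that this choice is forced (uniqueness). From \eqref{eq:coframe_field_on_B} we have $\theta^1 = \alpha^{-1}F(x,y)\,dx$, $\theta^2 = \alpha^{-1}\,dy$, and $\theta^3 = d\alpha$. First I would differentiate: $d\theta^1 = d(\alpha^{-1}F)\wedge dx = \alpha^{-1}F_y\,dy\wedge dx - \alpha^{-2}F\,d\alpha\wedge dx$, which I want to rewrite in terms of the coframe $\{\theta^1,\theta^2,\theta^3\}$. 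Since $dx = \alpha F^{-1}\theta^1$, $dy = \alpha\theta^2$, $d\alpha = \theta^3$, this becomes $d\theta^1 = -\alpha^{-1}F_y\,\theta^2\wedge(\alpha F^{-1}\theta^1) - \alpha^{-2}F\,\theta^3\wedge(\alpha F^{-1}\theta^1) = \big(F^{-1}F_y\,\theta^2 + \alpha^{-1}\theta^3\big)\wedge\theta^1$. Similarly $d\theta^2 = d(\alpha^{-1})\wedge dy = -\alpha^{-2}d\alpha\wedge dy = -\alpha^{-2}\theta^3\wedge(\alpha\theta^2) = \alpha^{-1}\theta^3\wedge\theta^2 = \big(\alpha^{-1}\theta^3\big)\wedge\theta^2$.

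The next step is to find a single $1$-form $\varphi$ with $d\theta^1 = \varphi\wedge\theta^1$ and $d\theta^2 = \varphi\wedge\theta^2$ simultaneously. Comparing the two computations, the $\theta^2$-equation forces $\varphi \equiv \alpha^{-1}\theta^3 \pmod{\theta^2}$, while the $\theta^1$-equation forces $\varphi \equiv F^{-1}F_y\,\theta^2 + \alpha^{-1}\theta^3 \pmod{\theta^1}$. These two congruences are compatible and pin down $\varphi$ uniquely: writing $\varphi = a\,\theta^1 + b\,\theta^2 + c\,\theta^3$, the condition $d\theta^1 = \varphi\wedge\theta^1$ gives $b = F^{-1}F_y$ and $c = \alpha^{-1}$ (with $a$ free so far), and then $d\theta^2 = \varphi\wedge\theta^2$ gives $a = 0$ and again $c = \alpha^{-1}$. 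Hence
\begin{equation}
\varphi = \frac{F_y(x,y)}{F(x,y)}\,\theta^2 + \frac{1}{\alpha}\,\theta^3 = \frac{F_y}{F}\,\theta^2 + \frac{d\alpha}{\alpha},
\end{equation}
and both existence and uniqueness follow from this bookkeeping.

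For uniqueness in the abstract, I would note that if $\varphi$ and $\varphi'$ both satisfy \eqref{Eq:equation_19_1}, then $(\varphi-\varphi')\wedge\theta^1 = 0$ and $(\varphi-\varphi')\wedge\theta^2 = 0$; since $\{\theta^1,\theta^2,\theta^3\}$ is a coframe, $\varphi - \varphi'$ must be proportional to both $\theta^1$ and $\theta^2$, hence zero. The only genuinely delicate point is making sure the same $\varphi$ works for \emph{both} rows at once — that is why it matters that the structure group was reduced to scalar matrices in \eqref{Eq:scalar_change}, so that the two components of the tautological relation cannot be adjusted independently; had the group been the full diagonal $GL(1)\times GL(1)$, one would need two separate forms $\varphi_1,\varphi_2$ and no such uniqueness statement would hold. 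I expect the only obstacle to be keeping the $\alpha$-powers and the $F^{-1}$ factors straight while converting $dx,dy,d\alpha$ into the $\theta$-coframe; everything else is linear algebra over the ring of functions on $B$.
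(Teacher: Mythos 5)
Your overall strategy is the same as the paper's (compute $d\theta^{1}$ and $d\theta^{2}$ in the coordinates $(x,y,\alpha)$, expand $\varphi$ in the coframe and match coefficients), and your uniqueness argument --- if $\varphi,\varphi'$ both satisfy \eqref{Eq:equation_19_1} then $(\varphi-\varphi')\wedge\theta^{1}=(\varphi-\varphi')\wedge\theta^{2}=0$ forces $\varphi=\varphi'$ --- is correct and is a genuine addition, since the paper only exhibits a $\varphi$ and checks existence. However, the computation as written contains sign and scaling slips that propagate into a final formula for $\varphi$ which is simply wrong, so the existence half of your argument does not stand as stated. Concretely: substituting $dy=\alpha\theta^{2}$ into $\alpha^{-1}F_{y}\,dy\wedge dx$ gives $\alpha F^{-1}F_{y}\,\theta^{2}\wedge\theta^{1}$ (the two factors of $\alpha$ from $dy$ and $dx$ do not cancel against $\alpha^{-1}$, and no sign appears), not the term $-F^{-1}F_{y}\,\theta^{2}\wedge\theta^{1}$ you use; likewise $-\alpha^{-2}\,\theta^{3}\wedge(\alpha\theta^{2})=-\alpha^{-1}\theta^{3}\wedge\theta^{2}$, not $+\alpha^{-1}\theta^{3}\wedge\theta^{2}$.

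Carrying out the coefficient matching correctly, with $\varphi=a\theta^{1}+b\theta^{2}+c\theta^{3}$, the equation $d\theta^{2}=\varphi\wedge\theta^{2}$ gives $a=0$, $c=-\alpha^{-1}$, and $d\theta^{1}=\varphi\wedge\theta^{1}$ gives $b=\alpha F_{y}/F$, $c=-\alpha^{-1}$; hence
\begin{equation*}
\varphi \;=\; \frac{\alpha F_{y}}{F}\,\theta^{2} \;-\; \frac{1}{\alpha}\,\theta^{3}
\;=\; \frac{F_{y}}{F}\,dy \;-\; \frac{d\alpha}{\alpha},
\end{equation*}
which agrees with the paper's formula \eqref{eq:form_theta}. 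Your stated answer $\varphi=\frac{F_{y}}{F}\theta^{2}+\frac{d\alpha}{\alpha}$ fails both structure equations: for instance $\bigl(\frac{F_{y}}{F}\theta^{2}+\frac{d\alpha}{\alpha}\bigr)\wedge\theta^{2}=+\alpha^{-2}\,d\alpha\wedge dy$, whereas $d\theta^{2}=-\alpha^{-2}\,d\alpha\wedge dy$. So the skeleton of the proof (coefficient matching for existence, wedge annihilation for uniqueness) is sound and matches the paper, but you must redo the conversion of $dx,dy,d\alpha$ into the $\theta$-coframe to land on the correct $\varphi$.
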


\begin{proof}
Calculate
\begin{equation}
\begin{split}
& d\theta^{1} = -\dfrac{1}{\alpha^{2}} F d\alpha \wedge dx + \dfrac{1}{\alpha} \partial_y F dy \wedge dx = 
\left( -\dfrac{1}{\alpha}d\alpha +\dfrac{F_{y}}{F}dy\right)\wedge \dfrac{1}{\alpha}F dx
\\
& d\theta^{2} =  -\dfrac{1}{\alpha^{2}} d\alpha \wedge dy =
 \left( -\dfrac{1}{\alpha}d\alpha +\dfrac{F_{y}}{F}dy\right)\wedge \dfrac{1}{\alpha} dy
\end{split}
\end{equation}
Now we take
\begin{equation}
\boxed{
\varphi = -\frac{1}{\alpha} d\alpha + \frac{\partial_y F}{F} dy  
}
\label{eq:form_theta}
\end{equation} 
One can easily check that 
\begin{equation*}
d\theta^{1} = \varphi \wedge \theta^{1} \text{ and }
d\theta^{2}= \varphi \wedge \theta^{2}. 
\end{equation*}
This proves the theorem.
\end{proof}

\begin{definition}
Given an adapted coframe $\eta$ in the manifold of adapted coframes of one ODE $B$ the $1$-forms $\theta^{1}$ and $\theta^{2}$ that satisfy \eqref{eq:coframe_field_on_B} are called the \emph{tautological forms}. 
Any choice of a tautological forms (coframe field $\theta = (\theta^{1}, \theta^{2})$) give as an unique $1$-form 
\begin{equation}
\omega =
\left(
\begin{matrix}
\varphi & 0 \\
0 & \varphi
\end{matrix}
\right)
\end{equation}
called \emph{associated \emph{connection form}}. The equations \eqref{Eq:equation_19_1} are called the \emph{structural equations}.
\end{definition}

Now we have 
\begin{equation*}
d\varphi = d\left( \frac{\partial_y F}{F} dy\right) = 
\frac{F\partial_{xy}F - \partial_x F \partial_y F}{F^2} dx \wedge dy.  
\end{equation*}
From this follows that
\begin{equation}
d\eta = K \theta^{1}\wedge \theta^{2},
\end{equation}
where
\begin{equation}
K = \alpha^{2}\left(\frac{F\partial_{xy}F - \partial_x F \partial_y F}{F^3} \right)
\label{eq:Blaschke_Chern_curvature}
\end{equation}

\begin{definition}
Given a $3$-web $\mathbb{R}^{2}$, $\mathcal{L} = \{L_{1},L_{2},L_{3}\}$ with associated tautological forms 
$\{ \theta^{1}, \theta^{2} \}$ such that the function $K$ which satisfy
\begin{equation}
d\eta = K \theta^{1} \wedge \theta^{2},
\end{equation}
is called the \emph{Blaschke-Chern curvature} for the $3$-web.
\end{definition}

\begin{example}\label{Exa:example4}
Calculate the Blaschke-Chern curvature for 
\begin{equation}
\dfrac{dy}{dx}=1
\end{equation}
Using  \eqref{eq:Blaschke_Chern_curvature} we obtain that $K=0$.
\end{example}

\begin{example}\label{Exa:example5}
Calculate the Blaschke-Chern curvature for 
\begin{equation}
\dfrac{dy}{dx}=x+y
\end{equation}
By \eqref{eq:Blaschke_Chern_curvature}, we have that $K = -\frac{1}{(x+y)^3}$, 
therefore the ODEs $\dfrac{dy}{dx}=1$ and $\dfrac{dy}{dx}=x+y$ are not equivalent.
\end{example}

\begin{definition}[Good definition]\label{Def:good_definition}
Two ODE
\begin{equation}
\dfrac{dy}{dx} = F_{1}(x,y), \qquad 
\dfrac{dy}{dx} = F_{2}(x,y),
\end{equation}
are equivalents if they have the same Chern-Blaschke curvature. Always there exists a change of coordinates  $ \phi $ such that $\phi$ sends the associate $ 3 $-web of one to the associate $ 3 $-web of the other one.
\end{definition}

\begin{example}\label{Exa:example_5}
Decide if the ODEs:
\begin{equation}
\dfrac{dy}{dx} = 1-x, \qquad
\dfrac{dy}{dx} = xe^{-y}
\end{equation}
are equivalents in certain domains. Find a diffeomorphism $\phi$  such that $\phi$ sends the associate $ 3 $-web of one to the associate $ 3 $-web of the other one. 

Effectively the ODEs are equivalents up to the Chern-Blaschke curvature that is $K=0$. Then we can take
\begin{equation}
\begin{split}
U_{1} &= \{(x,y)\in \mathbb{R}^{2} \mid x>1 , y>0\}\\
U_{2} &= \{(x,y)\in \mathbb{R}^{2} \mid x>0 , y>0\}\\
(\tilde{x}, \tilde{y} ) &= \phi (x,y) = (x-1, \ln y)
\end{split}
\end{equation}

\begin{equation}
\begin{cases}
L_{1} = \{(x,y) \mid x= \text{const.}\}\\
L_{2} = \{(x,y) \mid y= \text{const.}\}\\
L_{3} = \{(x,y) \mid y=\left( x - \dfrac{x^{2}}{2} \right) +c_{1}\}
\end{cases}
\quad
\begin{cases}
L_{1} = \{(x,y) \mid x= \text{const.}\}\\
L_{2} = \{(x,y) \mid y= \text{const.}\}\\
L_{3} = \{(x,y) \mid y = \ln \left(c_{2}-\dfrac{x^{2}}{2}\right)\}
\end{cases}
\end{equation}
\end{example}

\begin{remark}
\begin{enumerate}
\item
The definition \ref{Def:good_definition} determines whether two EDO are the same up to a diffeomorphism. This means that we only we require one differential invariant, the Blaschke-Chern curvature but no all possible invariants.
\item
There is not a general method for to find the existing isomorphism. In the most cases we can find it by simple inspection.
\end{enumerate}
\end{remark}

\begin{exercise}\label{Exe:exercise3}
Decide if the ODEs:
\begin{equation}
\dfrac{dy}{dx} = xy \qquad
\dfrac{dy}{dx} = y
\end{equation}
are equivalents in certain domains. Find a diffeomorphism $\phi$  such that $\phi$ sends the associate $ 3 $-web of one to the associate $ 3 $-web of the other one. 
\begin{sol}
$U_{1}=U_{2}=\{(x,y)\in \mathbb{R}^{2} \mid x>0\}$, $K=0$ and 
$(\tilde{x}, \tilde{y} ) = \phi (x,y) = (x^{2}, y^{2})$
\end{sol}
\end{exercise}

\section*{Summary of Lecture 1}

The main problem is ``How to reduce to a simplest one ODE''.
\begin{itemize}
\item
Some ODE of first order we can reduce to a simplest are the ODE web-type. These equations are which we can associate a $3$-web.
\item
A $3$-web in $\mathbb{R}^{2}$ is a set of three foliation that are transverse, and is an example of a $G$-structure. 
\item
For any $3$-web we can calculated its Blaschke-Chern curvature form.
\item
Finally, two ODEs are equivalents if have the Blaschke-Chern curvature and this equivalence is up to a diffeomorphism. 
\end{itemize}

\textbf{What will we do in the next lecture?}. In the next lecture we will define what is $G$-structure in general a how we can adapted a coframe filed for a given geometrical structure.

\Closesolutionfile{ans}
\section*{Answers to exercises}
\input{Lecture1_answers}


\end{document}